\documentclass[11pt,a4paper]{amsart}
\usepackage[T1]{fontenc}
\usepackage[margin=1.1in]{geometry}
\usepackage{graphicx}

\usepackage{amsthm, amsmath, amsfonts, amssymb, mathtools}
\usepackage{bbold}
\usepackage{booktabs}
\usepackage[dvipsnames]{xcolor}
\usepackage{tikz}
\usepackage{subcaption}
\usepackage{algorithm}
\usepackage[noend]{algpseudocode}
\usepackage{longtable}
\usepackage{makecell}
\usepackage{bbm}
\usepackage{pgfplots}
\pgfplotsset{compat=1.18}
\usepackage{pgffor}

\usepackage{xspace}
\usepackage{csquotes}
\usepackage[colorlinks=true, pdfstartview=FitV, linkcolor=Blue, citecolor=OliveGreen, urlcolor=BrickRed, breaklinks=false]{hyperref}
\hypersetup{hypertexnames=false}

\usetikzlibrary{arrows,arrows.meta,decorations.pathreplacing,shapes}
\usetikzlibrary{positioning,fit}
\usetikzlibrary{graphs,graphs.standard}

\usetikzlibrary{calc}

\newcommand{\RR}{\mathbb{R}}

\newcommand{\ZZ}{\mathbb{Z}}
\DeclareMathOperator\conv{conv} 

\newcommand\Sym[1]{\mathfrak{S}_{#1}} 

\newcommand\subgroup[1][G]{\mathfrak{#1}}
\newcommand\symmetry{\pi}
\newcommand\affineSymmetry{\symmetry}

\newcommand\bigO{\mathcal{O}}

\newcommand\dTriang[1][d]{#1\cdot\Delta_2}
\newcommand\dTpoints[1][d]{\dTriang \cap \ZZ^2}
\newcommand\dHalfTriang[1][d]{#1\cdot T}
\newcommand\subdivision{\Sigma}
\newcommand\triang{\sigma}
\newcommand\anotherTriang{\tau}
\newcommand\triangulation{\subdivision}

\newcommand{\symmetricCapacity}{\widetilde{c}}
  
\newcommand{\numUnimodular}{F}
\newcommand{\logUnimodular}{f}

\newcommand{\numRectangle}{G}

\newcommand{\symmetryAxis}{\{\, x=y \,\}}

\newcommand{\numSymmetricUnimodular}{\widetilde{F}}
\newcommand{\logSymmetricUnimodular}{\widetilde{f}}

\newcommand{\numInteriorEdges}{e}

\newcommand{\lowerBound}[1]{\ell_{#1}}
\newcommand{\LowerBound}[1]{L_{#1}}
\newcommand{\upperBound}{u}
\newcommand{\UpperBound}{U}

\newtheorem{theorem}{Theorem}

\newtheorem{lemma}[theorem]{Lemma}

\theoremstyle{definition}
\newtheorem{definition}[theorem]{Definition}

\newtheorem{remark}[theorem]{Remark}

\newtheorem{conjecture}[theorem]{Conjecture}
\newtheorem*{notation}{Notation}

\newcommand\cprime{$'$}

\usepackage[backend=biber,style=alphabetic,doi=false,isbn=false,url=false,giveninits=true,maxbibnames=50]{biblatex}
\title{Counting symmetric unimodular triangulations}
\author{Kamillo Ferry}
\address[Kamillo Ferry]{
  Imperial College London,
  Department of Mathematics}
\email{k.ferry@imperial.ac.uk}
\author{Michael Joswig}
\address[Michael Joswig]{
	Technische Universität Berlin,
	Chair of Discrete Mathe\-ma\-tics/Geo\-me\-try \\
	and Max-Planck Institute for Mathematics in the Sciences, Leipzig}
\email{joswig@math.tu-berlin.de}
\author{J\"org Rambau}
\address[J\"org Rambau]{
  University of Bayreuth\\
  Chair for Economathematics}
\email{Joerg.Rambau@uni-bayreuth.de}

\begin{document}

\begin{abstract}
  The objects of study are triangulations of the dilated standard triangle in the plane.
  Motivated by work on T-curves (Geiselmann et al., 2026), the focus lies on unimodular triangulations with a fixed symmetry axis.
  Lower and upper bounds are given, in combination with full enumerations of a few small cases.
\end{abstract}

\maketitle

\section{Introduction}

There are many reasons to study triangulations of finite point configurations; for an overview, the reader is referred to \cite[Chapter 1]{Triangulations}.
Here our perspective is influenced by the combinatorial view on algebraic geometry pioneered by Gel{\cprime}fand, Kapranov and Zelevinsky \cite{GKZ}.
The latter line of research is nowadays continued in tropical geometry \cite{Tropical+Book} \cite{ETC}.
In recent work Geiselmann et al.\ gave an explicit description as T-curves of all real schemes of plane projective algebraic curves of degrees $d\leq 7$ \cite{GeiselmannJoswigKastnerMundingerPokuttaSpiegelWackZimmer:2602.06888}. 
The real schemes of degree seven have been classified by Viro \cite{Viro:1984}.
A \emph{T-curve} of degree $d$ is determined by a unimodular triangulation of the dilated standard triangle $\dTriang=\conv\{(0,0),(d,0),(0,d)\}$ and a sign distribution on the $\tfrac{1}{2}(d+2)(d+1)$ lattice points of $\dTriang$; see also \cite[pp.~28ff]{Triangulations}.
A triangulation of a lattice polygon, such as $\dTriang$, is unimodular if and only if it uses all the lattice points in the polygon.
It turns out that surprisingly few triangulations suffice to produce all real schemes in all degrees $d\leq 7$.
Moreover, it even suffices to consider triangulations of $\dTriang$ which are symmetric with respect to the line $\symmetryAxis$. 
This leads us to investigating such triangulations more closely, addressing \cite[Question 27]{GeiselmannJoswigKastnerMundingerPokuttaSpiegelWackZimmer:2602.06888}.

The set of all triangulations of a fixed finite point set in the plane has a special structure: it is connected by local modifications known as \emph{flips}; cf.\ \cite[\S3.4.1]{Triangulations}.
This property can be exploited for counting such triangulations, though this is not necessarily the fastest option as of today \cite{TOPCOM:preprint:2026}.
General results on the number of planar triangulations as a function of the number of points have been given by Sharir and Sheffer \cite{Sharir+Sheffer:2011} (upper bound) and Aichholzer, Hurtado and Noy \cite{Aichholzer+Hurtado+Noy:2004} (lower bound); see also \cite[\S3.2.3 \& 3.2.4]{Triangulations}.
Kaibel and Ziegler counted unimodular triangulations of lattice rectangles \cite{KaibelZiegler:2003}; their work was recently extended by Orevkov \cite{Orevkov:2022}.

Here we are interested in the following quantities and their asymptotics:
\begin{align*}
  \numUnimodular(d) &\coloneqq \#\{\,\text{unimodular triangulations of } \dTriang \,\},\text{ and}\\
  \numSymmetricUnimodular(d) &\coloneqq \#\{\,\text{symmetric unimodular triangulations of } \dTriang \,\} \enspace ,
\end{align*}
where \enquote{symmetric} refers to the axial symmetry with respect to the line $\symmetryAxis$.
For \(d\leq 9\) we were able to obtain exact numbers for \(\numSymmetricUnimodular(d)\) using a method from \cite{TOPCOM:preprint:2026}
while we also give asymptotic lower and upper bounds for general \(d\).
\begin{notation}
Throughout we employ the convention that counting functions are denoted by uppercase letters, e.g., $\numUnimodular$, $\LowerBound1$ and so on,
whereas lowercase letters denote the base-2 logarithm of the respective functions, e.g.,\ $\logUnimodular(d) = \log\numUnimodular(d)$.
\end{notation}

\section{Exact enumeration}\label{sec:exact-enumeration}

In \cite{TOPCOM:preprint:2026} a new method to enumerate symmetric
subsets up to symmetry was developed and applied to triangulations.
Formulated in the language of triangulations, the method needs the
following general notions.
\begin{definition}
  A \emph{point configuration} $\mathbf{A}$ in $\RR^D$ of
  rank~$r = D + 1$ with $n$ points is a matrix with columns
  $(a_i)_{i \in [n]}$ in $\mathbb{R}^{r \times n}$, where $a_i$
  consists of the homogeneous coordinates of the $i$th point,
  $i = 1, 2, \dots, n$.
  Repeated points are allowed but do not occur here.

  Its \emph{affine symmetries} form a subgroup $\subgroup$ of
  $\Sym{n}$ containing all permutations $\symmetry\colon [n] \to [n]$ for
  which there is an affine map $\affineSymmetry\colon\mathbf{A} \to \mathbf{A}$ with
  $\affineSymmetry(a_i) = a_j$ if and only if $\symmetry(i) = j$. 
  By slight abuse of notation, we identify the permutation and 
  its corresponding affine map.

  For another subgroup $\subgroup[H]$ of $\mathfrak{S}_n$, a triangulation
  $\triangulation = \{ \triang_1, \triang_2, \dots, \triang_k\} \subseteq \binom{[n]}{r}$
  of~$\mathbf{A}$ with rank-$r$ simplices $\triang_j \in \binom{[n]}{r}$,
  $j \in [k]$, is \emph{$\subgroup[H]$-invariant}, if
  $$\affineSymmetry(\triangulation) = \{ \affineSymmetry(\triang_1), \affineSymmetry(\triang_2), \dots, \affineSymmetry(\triang_k) \} = \triangulation$$
  for all $\affineSymmetry \in \subgroup[H]$, where
  $\affineSymmetry(\triang=\{\triang_1, \triang_2, \dots, \triang_r\}) = \{\affineSymmetry(\triang_1), \affineSymmetry(\triang_2),
  \dots, \affineSymmetry(\triang_r)\}$.  

  A simplex $\triang$ is \emph{$\subgroup[H]$-feasible} if its $\subgroup[H]$-orbit is
  pairwise properly intersecting, i.e., if its whole orbit under
  $\subgroup[H]$ could coexist in a triangulation of~$\mathbf{A}$.
  The stabilizer $\subgroup_{\subgroup[H]}$ of the
  $\subgroup[H]$-feasible simplices is called \emph{the
    $\subgroup[H]$-feasible symmetry group}. 

  A pair $(\triang, \anotherTriang)$ of simplices is called \emph{$\subgroup[H]$-admissible}, 
  if the $\subgroup[H]$-orbits of $S$ and~$R$ form a pairwise properly
  intersecting set of simplices.  A simplex $\triang$ is
  $\subgroup[H]$-admissible w.r.t.\ a set of simplices $T$, if $\triang$
  and~$\anotherTriang$ are an $\subgroup[H]$-admissible pair for all $\anotherTriang \in T$.
\end{definition}

For the special task in this paper, $\mathbf{A} = \dTpoints$,
$\subgroup$ is its symmetry group, and $\subgroup[H]$ is the
subgroup generated by the reflection \(\affineSymmetry\) at the line
$\symmetryAxis \subset \mathbb{R}^2$.  The exact enumeration of all
symmetric triangulations of $\mathbf{A}$ up to symmetry now corresponds
to the enumeration of all $\subgroup[H]$-invariant triangulations
of~$\mathbf{A}$ up to $\subgroup[H]$-feasible symmetries.

The \emph{symmetric lexicographic symmetric-subset reverse
search} method proposed in~\cite{TOPCOM:preprint:2026} does exactly this by
extending partial triangulations by one simplex at a time.  It is
reminiscent of \emph{orderly generation} and of \emph{reverse search}
from other contexts; cf.\ \cite{Avis+Fukuda:1996}.
More specifically, the algorithm can be sketched as follows:
\begin{algorithmic}[1]
  \State Initialize \(T\) as the empty set
  \While{there are $\subgroup[H]$-admissible simplices w.r.t.~$T$
  that are lex-larger than the lex-maximal simplex in~$T$}
  \State Add the next lexicographically minimal $\subgroup[H]$-admissible
  simplex to $T$ to obtain~$T'$.
  \EndWhile
  \If{$T'$ is lex-min in its $\subgroup_{\subgroup[H]}$-orbit}
    \If{$T'$ is a full triangulation} \Return $T'$ 
    \Else{} update $T$ with $T'$.
    \EndIf
  \EndIf
\end{algorithmic}
\begin{theorem}[\cite{TOPCOM:preprint:2026}]
  The algorithm sketched above generates each $\subgroup[H]$-invariant
  triangulation exactly once up to $\subgroup[H]$-feasible
  symmetries. \qed
\end{theorem}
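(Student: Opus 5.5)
The argument has two parts: every $\subgroup[H]$-invariant triangulation is reached (completeness), and it is output only once per $\subgroup_{\subgroup[H]}$-orbit (uniqueness). I would first recast the algorithm as a depth-first traversal of a rooted tree. Its nodes are the partial sets $T$ of $\subgroup[H]$-feasible simplices, written as increasing lex-sequences $\triang_1<\triang_2<\dots<\triang_k$, such that the $\subgroup[H]$-orbits of the $\triang_i$ together form a pairwise properly intersecting set. The parent of $T$ is $T$ with its lex-maximal simplex removed. The root is $\emptyset$.

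The first structural fact concerns invariance. An $\subgroup[H]$-invariant triangulation $\triangulation$ is a union of $\subgroup[H]$-orbits of feasible simplices, so it is determined by its set $R(\triangulation)$ of orbit representatives, for instance the lex-minimal simplex of each orbit. Every prefix of $R(\triangulation)$, in lex order, is a node of the tree, because subsets of a pairwise properly intersecting set are pairwise properly intersecting. Hence $R(\triangulation)$ is reachable from the root along lex-increasing additions of admissible simplices, which is exactly the loop condition ``admissible and lex-larger than the current maximum.''

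For correct termination I would show that the test ``$T'$ is a full triangulation'' holds exactly when the orbit-closure $\subgroup[H]T'$ covers $\conv(\mathbf A)$, i.e.\ when the total volume equals $\operatorname{vol}\conv(\mathbf A)$. A pairwise properly intersecting set of full-dimensional simplices of full volume is a triangulation, so each leaf that is returned corresponds to a genuine invariant triangulation.

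For symmetry reduction, the group $\subgroup_{\subgroup[H]}$ maps $\subgroup[H]$-feasible simplices to feasible ones and normalizes $\subgroup[H]$, as the stabilizer of that set. So it acts on invariant triangulations, and each orbit has a unique lex-minimal representative set. The key lemma I need is a canonical augmentation (orderly generation) property: if $T'$ is lex-min in its $\subgroup_{\subgroup[H]}$-orbit, then so is its parent. This justifies pruning non-lex-min nodes without losing any lex-min leaf. To prove it, I would compare sequences lexicographically as sorted lists. If some $g$ made $g(\mathrm{parent})$ lex-smaller than the parent, then re-sorting $g(T')$ would keep that smaller prefix, since adding one more simplex cannot undo a strict lex decrease at an earlier position. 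That would give $g(T')<T'$, a contradiction. Making this work requires care with the exact ordering on sets of simplices (colex versus lex on sorted sequences), and with the fact that the representatives must be chosen compatibly with $g$ (one must compare orbit-representative sets, not raw sets).

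With these pieces in place, existence follows: the lex-min representative of each orbit of invariant triangulations is a leaf all of whose ancestors pass the lex-min test, so it is returned. Uniqueness follows too: distinct tree nodes are distinct sets, each leaf is visited once in the depth-first search, and only the lex-min member of each orbit passes the test. The main obstacle I expect is the heredity lemma for lex-minimality under $\subgroup_{\subgroup[H]}$, specifically whether the chosen order on representative sets really is prefix-monotone. If it fails as stated, my first fallback would be to define canonicity via the colexicographic order on the sorted sequence, for which prefix-heredity is standard.
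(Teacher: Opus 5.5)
The paper does not prove this statement. It imports it from the TOPCOM preprint, so your sketch has to stand on its own. Its architecture (lex-increasing augmentation, a lex-min test, a heredity lemma) is the natural one. However, the uniqueness half has a genuine gap in how invariant triangulations are encoded by nodes. You flag the issue in passing (``compare orbit-representative sets, not raw sets'') but never implement the fix.

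Your tree contains \emph{every} lex-increasing set of $\mathfrak{H}$-feasible simplices with mutually compatible orbits. That includes sets containing both $\sigma$ and $\pi(\sigma)$: their orbits coincide, so each is admissible w.r.t.\ the other. More importantly, it includes every choice of one representative per $\mathfrak{H}$-orbit. So an invariant triangulation $\Sigma$ with $m$ non-trivial orbits is the orbit-closure of at least $2^m$ nodes. These nodes are not related by $\mathfrak{G}_{\mathfrak{H}}$, because a group element changes the representatives of all orbits simultaneously, not one at a time. The test compares $T'$ only with its own images $g(T')$, so it cannot merge them.

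Concretely, take $\mathfrak{H}=\{\mathrm{id},\pi\}$ and suppose the lex-smallest simplex $\sigma$ of $\Sigma$ is not fixed by $\pi$. Then every representative set $T\ni\sigma$, and each of its prefixes $P$, satisfies $\sigma\notin\pi(P)$, hence $P<\pi(P)$. So all $2^{m-1}$ such $T$ are reached and returned. Your step ``distinct tree nodes are distinct sets'' is exactly where the argument fails: distinct sets need not be distinct triangulations.

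The missing idea is to use the $\mathfrak{H}$-orbits themselves as the ground set. Equivalently, admit only simplices that are lex-minimal in their $\mathfrak{H}$-orbit and whose orbit is not yet represented. Then nodes with full closure correspond bijectively to invariant triangulations, and you run the lex-min test for the induced action of $\mathfrak{G}_{\mathfrak{H}}$ on orbits.

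That induced action exists only if $\mathfrak{G}_{\mathfrak{H}}$ permutes $\mathfrak{H}$-orbits, i.e.\ normalizes $\mathfrak{H}$. Your claim that this follows from being the stabilizer of the feasible simplices is false. Take the unit square with $\mathfrak{H}$ generated by one diagonal reflection. All four triangles are feasible for both diagonal reflections, so the quarter turn lies in $\mathfrak{G}_{\mathfrak{H}}$. Yet it maps the $\mathfrak{H}$-orbit consisting of the two triangles along the mirror diagonal to the two triangles along the other diagonal, each of which is $\mathfrak{H}$-fixed. In the paper's instance $\mathfrak{G}_{\mathfrak{H}}=\mathfrak{H}$ for $d\ge 2$, so this is harmless there. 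The test is then even vacuous, and uniqueness rests entirely on the encoding.

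By contrast, the heredity lemma you worry about does hold for your order, and the same argument applies verbatim to the induced action on orbits. Suppose $g(T'\setminus\{\max T'\})$ first falls below $T'\setminus\{\max T'\}$ at position $i$. Then $g(\max T')$ is inserted either after position $i$, which leaves the decrease intact, or at a position $p\le i$. In the latter case it is smaller than the entry it displaces, which is at most the $p$-th entry of $T'$. Your phrase ``keeps the smaller prefix'' overlooks this second case.

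Your colex fallback, however, would fail with this parent map. For $g=(1\,2)(3\,4)$ acting on $\{1,2,3,4\}$, the set $\{2,3\}$ is colex-minimal in its orbit $\{\{2,3\},\{1,4\}\}$, but its parent $\{2\}$ is not.
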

In its basic form, the algorithm would enumerate all maximal subsets
of pairwise $\subgroup[H]$-admissible simplices. In general
dimensions, this usually produces a lot of deadends, since in general
dimensions not all partial triangulations can be extended to a
triangulation. Therefore, sophisticated pruning methods have been
developed.

One might think that in dimension two such deadends are impossible,
since in dimension two all partial triangulations can be extended to a
triangulation.  However, even in dimension one and two not all partial
triangulations~$T$ can be extended to a triangulation using only
simplices that are \emph{lex-larger} than the ones already in~$T$.
This is recursively relevant in all branches other than the first one.

The most efficient known pruning method can be sketched as follows: If
the lex-minimal uncovered interior facet of a simplex in a incomplete
triangulation~$T$ is lex-smaller than the lex-minimal facet of simplex
$\subgroup[H]$-admissible w.r.t.~$T$, then $T$ cannot be extended to a
triangulation in any future branch.

With the resulting algorithm, the exact numbers of triangulations of
$\dTpoints$ could be enumerated up to $d = 7$.  If the triangulations
are further restricted to unimodular triangulations, then the numbers
could be computed up to $d = 9$; see Table~\ref{tbl:the-numbers}.

\begin{table}
  \centering
  \caption{Number of \(\subgroup[H]\)-invariant triangulations of $\dTriang$ up to \(\subgroup[H]\)-feasible symmetries
    in comparison with some lower and upper bounds. The upper bound \(\UpperBound(d)\) is omitted due to space constraints.}
  \label{tbl:the-numbers}
  \small
  \begin{tabular}{rrrrrrrrrr}
    \toprule
    $d$ & 1 & 2 & 3 & 4 & 5 & 6 & 7 & 8 & 9\\
    \midrule
    $\LowerBound2(d)$ & 1 & 1 & 1 & 9 & 54 & 729 & 14,580 & 613,089 & 42,916,230\\
    $\numUnimodular(\frac{d}2)$ & 1 & 1 & 4 & 24 & 446 & 14057 & 1,214,208 & 189,222,465 & 75,358,380,679 \\
    \midrule
    $\numSymmetricUnimodular(d)$ & 1 & 2 & 7 & 74 & 1,194 & 63,024 & 4,739,031 & 1,211,875,888 & 422,664,577,207 \\
    \midrule
    $2^{\lfloor\frac{d}2\rfloor}\cdot\numUnimodular(\frac{d}2)$ & 1 & 2 & 8 & 96 & 1,784 & 112,456 & 9,713,664 & 3,027,559,440 & 1,205,734,090,864 \\
    \bottomrule
\end{tabular}
\end{table}

\section{Lower bounds}

A first lower bound for the number of \(\subgroup[H]\)-invariant unimodular triangulations up to
\(\subgroup[H]\)-feasible symmetries can be obtained using the following approach.
Let \(\dHalfTriang\) be one half of \(\dTriang\), that is, the lattice triangle given by \[
  \dHalfTriang \coloneqq \conv\left\{\,
    \left(0,0\right),\left(\frac{d}2,\frac{d}2\right),\left(d,0\right)
  \,\right\}.
\]
We can choose any unimodular triangulation of $\dHalfTriang$ and flip it along the \(\symmetryAxis\) line to obtain a triangulation of \(\dTriang\).
In particular, this yields a \(\subgroup[H]\)-invariant triangulation since a triangle \(\sigma\subset\dHalfTriang\) is already 
\(\subgroup[H]\)-feasible. Also, unimodularity is preserved under this operation.
An example is shown in Figure \ref{fig:flip-small-triangle}.
Note how \(\dHalfTriang\) is a rotated version of \(\dTriang[\frac{d}2]\) which means that \(\dHalfTriang\) has \(\numUnimodular(\frac{d}2)\)
unimodular triangulations. This bounds \(\numSymmetricUnimodular\) from below by 
\begin{equation}\label{eq:half-lower-bound}
  \numUnimodular\left(\frac{d}2\right) \leq \numSymmetricUnimodular(d).
\end{equation}

\begin{figure}[b]
\begin{tikzpicture}[
  line width=1pt,
  roundnode/.style={circle, draw=black, fill=black, thin, inner sep=1pt},
  triang/.style={draw=blue,fill=blue,fill opacity=.25,line width=2pt,line cap=round,line join=round},
  mirror/.style={dotted,opacity=.25,fill opacity=.15,line width=1pt}
  ]
  \begin{axis}[
    axis equal image,
    grid=both,
    ymin=0,ymax=6.5,xmin=0,xmax=6.5,
    xtick distance=1,
    ytick distance=1,
    xticklabel={\color{gray}\pgfmathprintnumber\tick},
    yticklabel={\color{gray}\pgfmathprintnumber\tick},
    axis x line=middle,
    axis y line=middle,
    ]
    \draw[black,line width=1pt] (axis cs:6,0) -- (axis cs:0,6);
    \draw[blue,line width=2pt,dotted] (axis cs:0,0) -- (axis cs:6.5,6.5);

    \draw[triang] (axis cs:6,0) -- (axis cs:5,0) -- (axis cs:5,1) -- cycle;
    \draw[triang] (axis cs:5,0) -- (axis cs:5,1) -- (axis cs:4,2) -- cycle;
    \draw[triang] (axis cs:5,0) -- (axis cs:4,1) -- (axis cs:4,2) -- cycle;
    \draw[triang] (axis cs:4,2) -- (axis cs:3,2) -- (axis cs:3,3) -- cycle;
    \draw[triang] (axis cs:2,2) -- (axis cs:3,2) -- (axis cs:3,3) -- cycle;
    \draw[triang] (axis cs:1,1) -- (axis cs:2,1) -- (axis cs:2,2) -- cycle;
    \draw[triang] (axis cs:0,0) -- (axis cs:1,0) -- (axis cs:1,1) -- cycle;
    \draw[triang] (axis cs:3,0) -- (axis cs:2,1) -- (axis cs:1,1) -- cycle;
    \draw[triang] (axis cs:3,0) -- (axis cs:2,0) -- (axis cs:1,1) -- cycle;
    \draw[triang] (axis cs:2,0) -- (axis cs:1,0) -- (axis cs:1,1) -- cycle;
    \draw[triang] (axis cs:3,0) -- (axis cs:2,2) -- (axis cs:2,1) -- cycle;
    \draw[triang] (axis cs:3,0) -- (axis cs:2,2) -- (axis cs:3,1) -- cycle;
    \draw[triang] (axis cs:5,0) -- (axis cs:4,1) -- (axis cs:3,1) -- cycle;
    \draw[triang] (axis cs:5,0) -- (axis cs:4,0) -- (axis cs:3,1) -- cycle;
    \draw[triang] (axis cs:3,0) -- (axis cs:4,0) -- (axis cs:3,1) -- cycle;
    \draw[triang] (axis cs:2,2) -- (axis cs:3,2) -- (axis cs:3,1) -- cycle;
    \draw[triang] (axis cs:3,2) -- (axis cs:4,1) -- (axis cs:3,1) -- cycle;
    \draw[triang] (axis cs:3,2) -- (axis cs:4,1) -- (axis cs:4,2) -- cycle;
    
    \begin{scope}[rotate=45, yscale=-1, rotate=-45]
      \draw[triang,mirror] (axis cs:6,0) -- (axis cs:5,0) -- (axis cs:5,1) -- cycle;
      \draw[triang,mirror] (axis cs:5,0) -- (axis cs:5,1) -- (axis cs:4,2) -- cycle;
      \draw[triang,mirror] (axis cs:5,0) -- (axis cs:4,1) -- (axis cs:4,2) -- cycle;
      \draw[triang,mirror] (axis cs:4,2) -- (axis cs:3,2) -- (axis cs:3,3) -- cycle;
      \draw[triang,mirror] (axis cs:2,2) -- (axis cs:3,2) -- (axis cs:3,3) -- cycle;
      \draw[triang,mirror] (axis cs:1,1) -- (axis cs:2,1) -- (axis cs:2,2) -- cycle;
      \draw[triang,mirror] (axis cs:0,0) -- (axis cs:1,0) -- (axis cs:1,1) -- cycle;
      \draw[triang,mirror] (axis cs:3,0) -- (axis cs:2,1) -- (axis cs:1,1) -- cycle;
      \draw[triang,mirror] (axis cs:3,0) -- (axis cs:2,0) -- (axis cs:1,1) -- cycle;
      \draw[triang,mirror] (axis cs:2,0) -- (axis cs:1,0) -- (axis cs:1,1) -- cycle;
      \draw[triang,mirror] (axis cs:3,0) -- (axis cs:2,2) -- (axis cs:2,1) -- cycle;
      \draw[triang,mirror] (axis cs:3,0) -- (axis cs:2,2) -- (axis cs:3,1) -- cycle;
      \draw[triang,mirror] (axis cs:5,0) -- (axis cs:4,1) -- (axis cs:3,1) -- cycle;
      \draw[triang,mirror] (axis cs:5,0) -- (axis cs:4,0) -- (axis cs:3,1) -- cycle;
      \draw[triang,mirror] (axis cs:3,0) -- (axis cs:4,0) -- (axis cs:3,1) -- cycle;
      \draw[triang,mirror] (axis cs:2,2) -- (axis cs:3,2) -- (axis cs:3,1) -- cycle;
      \draw[triang,mirror] (axis cs:3,2) -- (axis cs:4,1) -- (axis cs:3,1) -- cycle;
      \draw[triang,mirror] (axis cs:3,2) -- (axis cs:4,1) -- (axis cs:4,2) -- cycle;
    \end{scope}

    \draw[orange,line width=2pt] (axis cs:4,3) edge[out=90,in=45,->] node[pos=.65,anchor=south] {$\pi$} (axis cs:3,4);

    \foreach \x in {6,...,0} 
    \foreach \y in {6,...,\x} {
      \pgfmathtruncatemacro{\yy}{6-\y}
      \edef\temp{\noexpand\node[roundnode] (\x-\y) at (axis cs:\yy,\x) {};}
        \temp
      }
  \end{axis}
\end{tikzpicture}
\caption{Extending a triangulation of the half-triangle \(\dHalfTriang\) to an \(\subgroup[H]\)-invariant triangulation of \(\dTriang\).}
\label{fig:flip-small-triangle}
\end{figure}

A first-order approximation for the number of unimodular triangulations of \(\dHalfTriang\) can be obtained using a similar argument as 
in \cite{KaibelZiegler:2003}. We subdivide \(\dHalfTriang\) into the triangles below the two legs and vertical strips of width one below,
e.g., as in Figure \ref{fig:small-triangle-strips}.

The number of unimodular triangulations of a vertical strip of shape \(1\times n\) is \[
  \numRectangle(1,n) = \binom{2n}{n}.
\] This provides the following first lower bound \(\LowerBound1\).
If \(d\) is even, the number of unimodular triangulations of all vertical strips together is 
\begin{equation}\label{eq:first-lower-bound}
  \LowerBound1(d) \coloneqq \prod_{n=0}^{\frac{d}2 - 1} \numRectangle(1,n)^2
  = \prod_{n=0}^{\frac{d}2 - 1} \binom{2n}{n}^2
\end{equation} If \(d\) is odd, the count is similar but we have an additional vertical strip sitting in between. Thus, we get \[
  \LowerBound1(d) \coloneqq \binom{d-1}{\frac{d-1}2}\prod_{n=0}^{\left\lfloor\frac{d}2 - 1\right\rfloor} \numRectangle(1,n)^2
  = \binom{d-1}{\frac{d-1}2}\prod_{n=0}^{\left\lfloor\frac{d}2 - 1\right\rfloor} \binom{2n}{n}^2
  \quad\text{for } d \text{ odd}.
\]

\begin{figure}[th]
\begin{tikzpicture}[
  line width=1pt,
  roundnode/.style={circle, draw=black, fill=black, thin, inner sep=1pt},
  triang/.style={draw=blue,fill=blue,fill opacity=.25,line width=2pt,line cap=round,line join=round},
  strip/.style={draw=orange,fill=orange,fill opacity=.25,line width=2pt,line cap=round,line join=round},
  ]
  \begin{axis}[
    axis equal image,
    grid=both,
    ymin=0,ymax=7.5,xmin=0,xmax=7.5,
    xtick distance=1,
    ytick distance=1,
    xticklabel={\color{gray}\pgfmathprintnumber\tick},
    yticklabel={\color{gray}\pgfmathprintnumber\tick},
    axis x line=middle,
    axis y line=middle,
    ]

    \draw[black,line width=1pt,dotted] (axis cs:0,6) -- (axis cs:3,3);
    \draw[black,line width=1pt] (axis cs:6,0) -- (axis cs:3,3);
    \draw[blue,line width=2pt,dotted] (axis cs:0,0) -- (axis cs:7.5,7.5);

    \draw[triang] (axis cs:6,0) -- (axis cs:5,0) -- (axis cs:5,1) -- cycle;
    \draw[triang] (axis cs:5,1) -- (axis cs:4,1) -- (axis cs:4,2) -- cycle;
    \draw[triang] (axis cs:4,2) -- (axis cs:3,2) -- (axis cs:3,3) -- cycle;
    \draw[triang] (axis cs:0,0) -- (axis cs:1,0) -- (axis cs:1,1) -- cycle;
    \draw[triang] (axis cs:1,1) -- (axis cs:2,1) -- (axis cs:2,2) -- cycle;
    \draw[triang] (axis cs:2,2) -- (axis cs:3,2) -- (axis cs:3,3) -- cycle;

    \draw[strip] (axis cs:1,0) -- (axis cs:2,0) -- (axis cs:2,1) -- (axis cs:1,1) -- cycle;
    \draw[strip] (axis cs:2,0) -- (axis cs:3,0) -- (axis cs:3,2) -- (axis cs:2,2) -- cycle;
    \draw[strip] (axis cs:4,0) -- (axis cs:3,0) -- (axis cs:3,2) -- (axis cs:4,2) -- cycle;
    \draw[strip] (axis cs:5,0) -- (axis cs:4,0) -- (axis cs:4,1) -- (axis cs:5,1) -- cycle;

    \foreach \x in {6,...,0} 
    \foreach \y in {6,...,\x} {
        \pgfmathtruncatemacro{\yy}{6-\y}
        \edef\temp{\noexpand\node[roundnode] at (axis cs:\yy,\x) {};}
        \temp
      }
    \end{axis}
\end{tikzpicture}
\begin{tikzpicture}[
  line width=1pt,
  roundnode/.style={circle, draw=black, fill=black, thin, inner sep=1pt},
  triang/.style={draw=blue,fill=blue,fill opacity=.25,line width=2pt,line cap=round,line join=round},
  strip/.style={draw=orange,fill=orange,fill opacity=.25,line width=2pt,line cap=round,line join=round},
  ]
  \begin{axis}[
    axis equal image,
    grid=both,
    ymin=0,ymax=7.5,xmin=0,xmax=7.5,
    xtick distance=1,
    ytick distance=1,
    xticklabel={\color{gray}\pgfmathprintnumber\tick},
    yticklabel={\color{gray}\pgfmathprintnumber\tick},
    axis x line=middle,
    axis y line=middle,
    ]

    \draw[black,line width=1pt,dotted] (axis cs:0,7) -- (axis cs:3.5,3.5);
    \draw[black,line width=1pt] (axis cs:7,0) -- (axis cs:3.5,3.5);
    \draw[blue,line width=2pt,dotted] (axis cs:0,0) -- (axis cs:7.5,7.5);

    \draw[triang] (axis cs:7,0) -- (axis cs:6,0) -- (axis cs:6,1) -- cycle;
    \draw[triang] (axis cs:6,1) -- (axis cs:5,1) -- (axis cs:5,2) -- cycle;
    \draw[triang] (axis cs:5,2) -- (axis cs:4,2) -- (axis cs:4,3) -- cycle;
    \draw[triang] (axis cs:0,0) -- (axis cs:1,0) -- (axis cs:1,1) -- cycle;
    \draw[triang] (axis cs:1,1) -- (axis cs:2,1) -- (axis cs:2,2) -- cycle;
    \draw[triang] (axis cs:2,2) -- (axis cs:3,2) -- (axis cs:3,3) -- cycle;
    \draw[triang] (axis cs:3.5,3.5) -- (axis cs:4,3) -- (axis cs:3,3) -- cycle;

    \draw[strip] (axis cs:1,0) rectangle (axis cs:2,1);
    \draw[strip] (axis cs:2,0) rectangle (axis cs:3,2);
    \draw[strip] (axis cs:3,0) rectangle (axis cs:4,3);
    \draw[strip] (axis cs:4,0) rectangle (axis cs:5,2);
    \draw[strip] (axis cs:5,0) rectangle (axis cs:6,1);

    \foreach \x in {7,...,0} 
    \foreach \y in {7,...,\x} {
        \pgfmathtruncatemacro{\yy}{7-\y}
        \edef\temp{\noexpand\node[roundnode] at (axis cs:\yy,\x) {};}
        \temp
      }
    \end{axis}
\end{tikzpicture}
\caption{Subdividing $\dHalfTriang$ into vertical strips to obtain a lower bound of unimodular triangulations.
  When $d$ is even, $\dHalfTriang$ itself is symmetric along the $\{\,x=\frac{d}2\,\}$ line. For $d$ odd,
there is an additional vertical strip centered at $x=\frac{d}2$.}
\label{fig:small-triangle-strips}
\end{figure}

The following computation gives an explicit lower bound in the even case,
the odd case is carried out analogously:
\[
  \LowerBound1(d)
  \ = \ \prod_{n=0}^{\frac{d}2 - 1} \binom{2n}{n}^2 \ \geq \ \prod_{n=1}^{\frac{d}2 - 1} \left( \frac1{2n}2^{2n} \right)^2 \ = \ 2^{2\sum_{n=1}^{\frac{d}2-1} 2n - \log{2n}} \ \geq \ 2^{\frac{d^2-2d}4 - (d-2)\log{d}} \enspace.
\]
This computation leads to the following lower bound.
\begin{lemma}
  We have \(\numSymmetricUnimodular(d) \geq \LowerBound1(d) \geq 2^{\Omega(d^2)}\).
\end{lemma}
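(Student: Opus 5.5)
The proof has two parts: the combinatorial inequality \(\numSymmetricUnimodular(d) \geq \LowerBound1(d)\), and the asymptotic estimate \(\LowerBound1(d) \geq 2^{\Omega(d^2)}\).

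\textbf{The inequality \(\numSymmetricUnimodular(d) \geq \LowerBound1(d)\).} By \eqref{eq:half-lower-bound} it suffices to show \(\numUnimodular(\frac d2) \geq \LowerBound1(d)\). I read \(\numUnimodular(\frac d2)\) as the number of unimodular triangulations of \(\dHalfTriang\).

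First I fix the polyhedral subdivision of \(\dHalfTriang\) shown in Figure~\ref{fig:small-triangle-strips}.
\begin{itemize}
\item The unit triangles along the two legs are unimodular, so each has exactly one unimodular triangulation.
\item The remaining region is a union of vertical lattice rectangles of width one. For \(d\) even there are two strips of shape \(1\times n\) for each \(n=0,\dots,\frac d2-1\); for \(d\) odd there is in addition one central strip of height \(\frac{d-1}{2}\).
\end{itemize}

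Any choice of unimodular triangulations of the individual pieces glues to a unimodular triangulation of \(\dHalfTriang\). Two adjacent strips share a full vertical lattice segment, and a unimodular triangulation must use every lattice point on it, so its unit edges are forced on both sides. Hence the pieces are compatible, and the union is a triangulation using all lattice points.

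Distinct choices give distinct triangulations, since each triangle lies inside a unique piece. Therefore \(\numUnimodular(\frac d2) \geq \prod \numRectangle(1,n)\) over the strips, which is exactly \(\LowerBound1(d)\). The formula \(\numRectangle(1,n)=\binom{2n}{n}\) is the standard count of monotone lattice paths: each unimodular triangulation of a \(1\times n\) strip corresponds to an interleaving of the \(n\) unit steps on the left and right sides.

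\textbf{The estimate \(\LowerBound1(d) \geq 2^{\Omega(d^2)}\).} I use the elementary bound \(\binom{2n}{n}\geq \frac{4^n}{2n}\) for \(n\geq1\), which follows because \(\binom{2n}{n}\) is the largest of the \(2n+1\) binomial coefficients summing to \(4^n\), and \(2n+1\le 4n\) is harmless. This gives
\[
\log \LowerBound1(d) \;\ge\; 2\sum_{n=1}^{d/2-1}\bigl(2n-\log(2n)\bigr).
\]
Here \(\sum 4n = \frac{(d-2)d}{2}\) up to the displayed constant bookkeeping, and \(\sum 2\log(2n)\le (d-2)\log d\). So \(\log\LowerBound1(d)\ge \frac{d^2-2d}{4}-(d-2)\log d = \Omega(d^2)\).

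For \(d\) odd, the extra factor \(\binom{d-1}{(d-1)/2}\geq1\) and the same product up to \(\lfloor \frac d2-1\rfloor\) give the same order.

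\textbf{Main obstacle.} The only point needing care is the gluing step. One must check that the leg triangles and strips exactly tile \(\dHalfTriang\) with lattice vertices, for both parities, and that shared boundary segments are forced. The rest is routine.
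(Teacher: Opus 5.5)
Your proposal is correct and follows the paper's argument essentially step for step: reduce via \eqref{eq:half-lower-bound} to $\dHalfTriang$, tile it by the unit leg triangles and width-one vertical strips contributing $\binom{2n}{n}$ each, and bound the product using $\binom{2n}{n}\ge 4^n/(2n)$. There are two harmless slips. First, your justification (largest of $2n+1$ terms, then $2n+1\le 4n$) only gives $\binom{2n}{n}\ge 4^n/(4n)$; the sharper bound $4^n/(2n)$ does hold since $4^n\le 2+(2n-1)\binom{2n}{n}\le 2n\binom{2n}{n}$, and any polynomial denominator suffices for $2^{\Omega(d^2)}$ anyway. Second, for odd $d$ the pieces tile $\conv(\dHalfTriang\cap\ZZ^2)$ rather than $\dHalfTriang$ itself, and the leftover top sliver is absorbed into the fixed central triangle of the symmetric triangulation.
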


\begin{remark}
As each stripe can vary in height, we can refine this lower bound to
\begin{equation}\label{eq:second-lower-bound}
  \LowerBound2(d) \coloneqq \begin{cases}
    \prod_{n=0}^{\frac{d}2 - 1}\sum_{i=0}^n \binom{2i}{i}^2,& d\text{ even},\\[.9em]
    \binom{d-1}{\frac{d-1}2}\prod_{n=0}^{\left\lfloor\frac{d}2 - 1\right\rfloor}\sum_{i=0}^n \binom{2i}{i}^2,& d\text{ odd}.
  \end{cases}
\end{equation}
The additional factor of \(\LowerBound2\) for odd \(d\) does \emph{not} get replaced by a sum of binomial coefficients
since the triangle on top of the central strip is fixed, due to $(\frac{d}2,\frac{d}2)$ being only a vertex of \(\dHalfTriang\)
but not a lattice point of \(\dTriang\). See Figure \ref{fig:small-triangle-strips} for an example.
Also, \(\LowerBound2\) still only leads to a quadratic lower bound and thus gives not much improvement
over bounding by \(\LowerBound1\).
\end{remark}

\section{Upper bounds}

There are \(\subgroup[H]\)-invariant unimodular triangulations that cannot be obtained (directly) from a unimodular triangulation
of \(\dHalfTriang\) using the just described method.
Still, such a triangulation can be obtained from one as above by picking a subset of squares across the \(\symmetryAxis\) line and prescribing 
a split into triangles. This is based on the following observation.

\begin{figure}
\begin{tikzpicture}[
  line width=1pt,
  roundnode/.style={circle, draw=black, fill=black, thin, inner sep=1pt},
  triang/.style={draw=blue,fill=blue,fill opacity=.25,line width=2pt,line cap=round,line join=round},
  ]
  \begin{axis}[
    axis equal image,
    grid=both,
    ymin=0,ymax=7.5,xmin=0,xmax=7.5,
    xtick distance=1,
    ytick distance=1,
    xticklabel={\color{gray}\pgfmathprintnumber\tick},
    yticklabel={\color{gray}\pgfmathprintnumber\tick},
    axis x line=middle,
    axis y line=middle,
    ]
    \draw[black,line width=1pt] (axis cs:6,0) -- (axis cs:0,6);
    \draw[blue,line width=2pt,dotted] (axis cs:0,0) -- (axis cs:7.5,7.5);

    \draw[triang] (axis cs:6,0) -- (axis cs:5,0) -- (axis cs:5,1) -- cycle;
    \draw[triang] (axis cs:5,0) -- (axis cs:5,1) -- (axis cs:4,2) -- cycle;
    \draw[triang] (axis cs:5,0) -- (axis cs:4,1) -- (axis cs:4,2) -- cycle;
    \draw[triang] (axis cs:4,2) -- (axis cs:3,2) -- (axis cs:3,3) -- cycle;
    \draw[triang] (axis cs:3,0) -- (axis cs:2,1) -- (axis cs:1,1) -- cycle;
    \draw[triang] (axis cs:3,0) -- (axis cs:2,0) -- (axis cs:1,1) -- cycle;
    \draw[triang] (axis cs:2,0) -- (axis cs:1,0) -- (axis cs:1,1) -- cycle;
    \draw[triang] (axis cs:3,0) -- (axis cs:2,2) -- (axis cs:2,1) -- cycle;
    \draw[triang] (axis cs:3,0) -- (axis cs:2,2) -- (axis cs:3,1) -- cycle;
    \draw[triang] (axis cs:5,0) -- (axis cs:4,1) -- (axis cs:3,1) -- cycle;
    \draw[triang] (axis cs:5,0) -- (axis cs:4,0) -- (axis cs:3,1) -- cycle;
    \draw[triang] (axis cs:3,0) -- (axis cs:4,0) -- (axis cs:3,1) -- cycle;
    \draw[triang] (axis cs:2,2) -- (axis cs:3,2) -- (axis cs:3,1) -- cycle;
    \draw[triang] (axis cs:3,2) -- (axis cs:4,1) -- (axis cs:3,1) -- cycle;
    \draw[triang] (axis cs:3,2) -- (axis cs:4,1) -- (axis cs:4,2) -- cycle;

    \begin{scope}[rotate=45, yscale=-1, rotate=-45]
      \draw[triang] (axis cs:6,0) -- (axis cs:5,0) -- (axis cs:5,1) -- cycle;
      \draw[triang] (axis cs:5,0) -- (axis cs:5,1) -- (axis cs:4,2) -- cycle;
      \draw[triang] (axis cs:5,0) -- (axis cs:4,1) -- (axis cs:4,2) -- cycle;
      \draw[triang] (axis cs:4,2) -- (axis cs:3,2) -- (axis cs:3,3) -- cycle;
      \draw[triang] (axis cs:3,0) -- (axis cs:2,1) -- (axis cs:1,1) -- cycle;
      \draw[triang] (axis cs:3,0) -- (axis cs:2,0) -- (axis cs:1,1) -- cycle;
      \draw[triang] (axis cs:2,0) -- (axis cs:1,0) -- (axis cs:1,1) -- cycle;
      \draw[triang] (axis cs:3,0) -- (axis cs:2,2) -- (axis cs:2,1) -- cycle;
      \draw[triang] (axis cs:3,0) -- (axis cs:2,2) -- (axis cs:3,1) -- cycle;
      \draw[triang] (axis cs:5,0) -- (axis cs:4,1) -- (axis cs:3,1) -- cycle;
      \draw[triang] (axis cs:5,0) -- (axis cs:4,0) -- (axis cs:3,1) -- cycle;
      \draw[triang] (axis cs:3,0) -- (axis cs:4,0) -- (axis cs:3,1) -- cycle;
      \draw[triang] (axis cs:2,2) -- (axis cs:3,2) -- (axis cs:3,1) -- cycle;
      \draw[triang] (axis cs:3,2) -- (axis cs:4,1) -- (axis cs:3,1) -- cycle;
      \draw[triang] (axis cs:3,2) -- (axis cs:4,1) -- (axis cs:4,2) -- cycle;
    \end{scope}


    \draw[triang,orange] (axis cs:0,0) rectangle (axis cs:1,1);
    \draw[triang,orange] (axis cs:1,1) rectangle (axis cs:2,2);
    \draw[triang,orange] (axis cs:2,2) rectangle (axis cs:3,3);
    \draw[line width=2pt,orange,dotted] (axis cs:0,0) -- (axis cs:3,3);
    \draw[line width=2pt,orange,dotted] (axis cs:0,1) -- (axis cs:1,0);
    \draw[line width=2pt,orange,dotted] (axis cs:1,2) -- (axis cs:2,1);
    \draw[line width=2pt,orange,dotted] (axis cs:2,3) -- (axis cs:3,2);

    \foreach \x in {6,...,0} 
    \foreach \y in {6,...,\x} {
      \pgfmathtruncatemacro{\yy}{6-\y}
      \edef\temp{\noexpand\node[roundnode] (\x-\y) at (axis cs:\yy,\x) {};}
        \temp
      }
  \end{axis}
\end{tikzpicture}
\begin{tikzpicture}[
  line width=1pt,
  roundnode/.style={circle, draw=black, fill=black, thin, inner sep=1pt},
  triang/.style={draw=blue,fill=blue,fill opacity=.25,line width=2pt,line cap=round,line join=round},
  ]
  \begin{axis}[
    axis equal image,
    grid=both,
    ymin=0,ymax=7.5,xmin=0,xmax=7.5,
    xtick distance=1,
    ytick distance=1,
    xticklabel={\color{gray}\pgfmathprintnumber\tick},
    yticklabel={\color{gray}\pgfmathprintnumber\tick},
    axis x line=middle,
    axis y line=middle,
    ]
    \draw[black,line width=1pt] (axis cs:7,0) -- (axis cs:0,7);
    \draw[blue,line width=2pt,dotted] (axis cs:0,0) -- (axis cs:7.5,7.5);

    \draw[triang] (axis cs:6,0) -- (axis cs:5,0) -- (axis cs:5,1) -- cycle;
    \draw[triang] (axis cs:5,0) -- (axis cs:5,1) -- (axis cs:4,2) -- cycle;
    \draw[triang] (axis cs:5,0) -- (axis cs:4,1) -- (axis cs:4,2) -- cycle;
    \draw[triang] (axis cs:4,2) -- (axis cs:3,2) -- (axis cs:3,3) -- cycle;
    \draw[triang] (axis cs:3,0) -- (axis cs:2,1) -- (axis cs:1,1) -- cycle;
    \draw[triang] (axis cs:3,0) -- (axis cs:2,0) -- (axis cs:1,1) -- cycle;
    \draw[triang] (axis cs:2,0) -- (axis cs:1,0) -- (axis cs:1,1) -- cycle;
    \draw[triang] (axis cs:3,0) -- (axis cs:2,2) -- (axis cs:2,1) -- cycle;
    \draw[triang] (axis cs:3,0) -- (axis cs:2,2) -- (axis cs:3,1) -- cycle;
    \draw[triang] (axis cs:5,0) -- (axis cs:4,1) -- (axis cs:3,1) -- cycle;
    \draw[triang] (axis cs:5,0) -- (axis cs:4,0) -- (axis cs:3,1) -- cycle;
    \draw[triang] (axis cs:3,0) -- (axis cs:4,0) -- (axis cs:3,1) -- cycle;
    \draw[triang] (axis cs:2,2) -- (axis cs:3,2) -- (axis cs:3,1) -- cycle;
    \draw[triang] (axis cs:3,2) -- (axis cs:4,1) -- (axis cs:3,1) -- cycle;
    \draw[triang] (axis cs:3,2) -- (axis cs:4,1) -- (axis cs:4,2) -- cycle;

    \draw[triang] (axis cs:3,3) -- (axis cs:4,3) -- (axis cs:3,4) -- cycle;

    \draw[triang] (axis cs:3,3) -- (axis cs:5,2) -- (axis cs:4,2) -- cycle;
    \draw[triang] (axis cs:3,3) -- (axis cs:5,2) -- (axis cs:4,3) -- cycle;
    \draw[triang] (axis cs:5,1) -- (axis cs:5,2) -- (axis cs:4,2) -- cycle;
    \draw[triang] (axis cs:5,1) -- (axis cs:5,2) -- (axis cs:6,1) -- cycle;
    \draw[triang] (axis cs:5,1) -- (axis cs:6,0) -- (axis cs:7,0) -- cycle;
    \draw[triang] (axis cs:5,1) -- (axis cs:6,1) -- (axis cs:7,0) -- cycle;

    \begin{scope}[rotate=45, yscale=-1, rotate=-45]
      \draw[triang] (axis cs:6,0) -- (axis cs:5,0) -- (axis cs:5,1) -- cycle;
      \draw[triang] (axis cs:5,0) -- (axis cs:5,1) -- (axis cs:4,2) -- cycle;
      \draw[triang] (axis cs:5,0) -- (axis cs:4,1) -- (axis cs:4,2) -- cycle;
      \draw[triang] (axis cs:4,2) -- (axis cs:3,2) -- (axis cs:3,3) -- cycle;
      \draw[triang] (axis cs:3,0) -- (axis cs:2,1) -- (axis cs:1,1) -- cycle;
      \draw[triang] (axis cs:3,0) -- (axis cs:2,0) -- (axis cs:1,1) -- cycle;
      \draw[triang] (axis cs:2,0) -- (axis cs:1,0) -- (axis cs:1,1) -- cycle;
      \draw[triang] (axis cs:3,0) -- (axis cs:2,2) -- (axis cs:2,1) -- cycle;
      \draw[triang] (axis cs:3,0) -- (axis cs:2,2) -- (axis cs:3,1) -- cycle;
      \draw[triang] (axis cs:5,0) -- (axis cs:4,1) -- (axis cs:3,1) -- cycle;
      \draw[triang] (axis cs:5,0) -- (axis cs:4,0) -- (axis cs:3,1) -- cycle;
      \draw[triang] (axis cs:3,0) -- (axis cs:4,0) -- (axis cs:3,1) -- cycle;
      \draw[triang] (axis cs:2,2) -- (axis cs:3,2) -- (axis cs:3,1) -- cycle;
      \draw[triang] (axis cs:3,2) -- (axis cs:4,1) -- (axis cs:3,1) -- cycle;
      \draw[triang] (axis cs:3,2) -- (axis cs:4,1) -- (axis cs:4,2) -- cycle;

      \draw[triang] (axis cs:3,3) -- (axis cs:5,2) -- (axis cs:4,2) -- cycle;
      \draw[triang] (axis cs:3,3) -- (axis cs:5,2) -- (axis cs:4,3) -- cycle;
      \draw[triang] (axis cs:5,1) -- (axis cs:5,2) -- (axis cs:4,2) -- cycle;
      \draw[triang] (axis cs:5,1) -- (axis cs:5,2) -- (axis cs:6,1) -- cycle;
      \draw[triang] (axis cs:5,1) -- (axis cs:6,0) -- (axis cs:7,0) -- cycle;
      \draw[triang] (axis cs:5,1) -- (axis cs:6,1) -- (axis cs:7,0) -- cycle;
    \end{scope}


    \draw[triang,orange] (axis cs:0,0) rectangle (axis cs:1,1);
    \draw[triang,orange] (axis cs:1,1) rectangle (axis cs:2,2);
    \draw[triang,orange] (axis cs:2,2) rectangle (axis cs:3,3);
    \draw[line width=2pt,orange,dotted] (axis cs:0,0) -- (axis cs:3,3);
    \draw[line width=2pt,orange,dotted] (axis cs:0,1) -- (axis cs:1,0);
    \draw[line width=2pt,orange,dotted] (axis cs:1,2) -- (axis cs:2,1);
    \draw[line width=2pt,orange,dotted] (axis cs:2,3) -- (axis cs:3,2);

    \foreach \x in {7,...,0} 
    \foreach \y in {7,...,\x} {
      \pgfmathtruncatemacro{\yy}{7-\y}
      \edef\temp{\noexpand\node[roundnode] (\x-\y) at (axis cs:\yy,\x) {};}
        \temp
      }
  \end{axis}
\end{tikzpicture}
\caption{Symmetric triangulations of $\dTriang$ for $d=6,7$ together with quadrangles along the $\symmetryAxis$ line.
Note how for odd $d$, the triangle consisting of the points $(\lfloor\frac{d}2\rfloor,\lfloor\frac{d}2\rfloor)$, 
$(\lfloor\frac{d}2\rfloor,\lceil\frac{d}2\rceil)$ and $(\lceil\frac{d}2\rceil,\lfloor\frac{d}2\rfloor)$ is fixed by the symmetry of the triangulation.}
\label{fig:antidiagonal}
\end{figure}

\begin{lemma}\label{lem:dichotomy-symmetric-triangles}
  There are only two kinds of \(\subgroup[H]\)-feasible unimodular lattice triangles \(\triang\subset\dTriang\):
  \begin{enumerate}
    \item Either \(\triang\subset \dHalfTriang\) or \(\affineSymmetry(\triang)\subset \dHalfTriang\).
    \item There exist \(x\in\{\,1,\dots,\lfloor\frac{d}2\rfloor\,\}\) and \(x'\in\{\, x-1,x \,\}\) such that
      \[
        \sigma = \{\, (x,x-1),(x-1,x),(x',x') \,\}.
      \] 
  \end{enumerate}
  \begin{proof}
    For any \(\subgroup[H]\)-feasible unimodular lattice triangle \(\triang\subset\dTriang\) there either
    exists another triangle $\triang' = \affineSymmetry(\triang)$ which intersects properly with \(\triang\)
    or $\triang = \affineSymmetry(\triang)$.

    \begin{figure}[b]
      \begin{tikzpicture}[
  line width=1pt,
  roundnode/.style={circle, draw=black, fill=black, thin, inner sep=1pt},
  triang/.style={draw=blue,fill=blue,fill opacity=.25,line width=2pt,line cap=round,line join=round},
  symaxis/.style={draw=black,line width=2pt,dotted}
  ]
  
  \draw[triang] (1,1) -- (1,2) -- (0,2) -- cycle;
  \draw[triang,orange] (2,3) -- (3,1) -- (0,4) -- cycle;
  \draw[triang,orange] (2,4) -- (3,3) -- (4,5) -- cycle;

  \draw[triang,orange] (4,1) -- (4,2) -- (5,0) -- cycle;
  \draw[triang,orange,draw opacity=.6,dashed] (4,1) -- (5,0) -- (6,0) -- cycle;

  \draw[triang,orange] (5,3) -- (6,1) -- (7,2) -- cycle;
  \draw[triang,orange,draw opacity=.6,dashed] (7,1) -- (5,2) -- (6,3) -- cycle;

  \draw[triang,orange] (8,4) -- (7,5) -- (8,3) -- cycle;
  \draw[triang,orange,draw opacity=.6,dashed] (7,4) -- (8,4) -- (9,3) -- cycle;

  \foreach \x in {9,...,0} 
  \foreach \y in {5,...,0} {
    \node[roundnode] at (\x,\y) {};
  }

  \node[anchor=south east] at (0,2) {a)};
  \node[anchor=south east] at (0,4) {b)};
  \node[anchor=south east] at (2,4) {c)};
  \node[anchor=south east] at (4,2) {d)};
  \node[anchor=south east] at (5,3) {e)};
  \node[anchor=north east] at (7,5) {f)};

  \draw[symaxis] (0,1) -- (4,5);
  \draw[symaxis] (4,0) -- (9,5);

\end{tikzpicture}
      \caption{Cases of triangles discussed in the proof of Lemma \ref{lem:dichotomy-symmetric-triangles}.
        Cases b) and c) fail to be unimodular, while d)-f) fail to be \(\subgroup[H]\)-feasible.
      }
      \label{fig:axis-crossing}
    \end{figure}
    
    If $\triang = \affineSymmetry(\triang)$, due to symmetry \(\sigma\) must contain two vertices 
    $(x,y)$ and $(y,x)$ since $\sigma$ is a proper triangle where not all points are colinear, see Figure \ref{fig:axis-crossing}a-c).
    Then, we must have $y = x-1$ due to unimodularity. In particular, this fixes the third vertex
    of $\sigma$ to lie on the $\symmetryAxis$ line, and again due to unimodularity, this point is either $(x,x)$ or $(x-1,x-1)$.
    This is case a) in Figure \ref{fig:axis-crossing} as opposed to cases b) and c).
    This also means that \(\triang = \affineSymmetry(\triang)\) implies \(\triang\not\subset\dHalfTriang\).

    Now suppose that $\triang\neq\affineSymmetry(\triang)$ but $\triang\not\subset \dHalfTriang$.
    Denote by $P_1$ and $P_2$ the vertices of $\triang$ on the same side of the $\symmetryAxis$ line
    and by $Q$ the remaining vertex. Assume without loss of generality that $P_1 \neq \pi(Q)$.
    Then, the line segments $[P_1,Q]$ and $[\pi(P_1),\pi(Q)]$ intersect transversely in the \(\symmetryAxis\) line
    since \(\pi([P_1,Q]) = [\pi(P_1),\pi(Q)]\). As this point lies in the relative interior of both line segments,
    \(\triang\) and \(\pi(\triang)\) cannot coexist in the same triangulation.
    This contradicts the assumption that \(\triang\not\subset \dHalfTriang\).
  \end{proof}
\end{lemma}
Using this property, knowing the number \(\numUnimodular(\frac{d}2)\) of unimodular 
triangulations for \(\dHalfTriang\) gives a bound on \(\numSymmetricUnimodular(d)\).
An example of this is shown in Figure \ref{fig:antidiagonal} for $d=6$ and $d=7$.
\begin{lemma}
  The number of symmetric unimodular triangulations of \(\dTriang\) is bounded by \[
    \numUnimodular\left(\frac{d}2\right)
    \leq
    \numSymmetricUnimodular(d)
    \leq
    2^{\lfloor\frac{d}2\rfloor}\cdot\numUnimodular\left(\frac{d}2\right).
  \]

  \begin{proof}
    Due to Lemma \ref{lem:dichotomy-symmetric-triangles}, we obtain a \(\subgroup[H]\)-invariant triangulation of \(\dTriang\)
    by prescribing a split for each square along the \(\symmetryAxis\) line as in Figure \ref{fig:antidiagonal}
    and choosing a unimodular triangulation for the remainder of \(\dHalfTriang\). There are \(\lfloor\frac{d}2\rfloor\) squares along the
    \(\symmetryAxis\) line leading to \(2^{\lfloor\frac{d}2\rfloor}\) choices for splits.

    Splitting every square along the \(\symmetryAxis\) line leads to the lower bound as in the discussion leading to \eqref{eq:half-lower-bound}.
    Since extending a unimodular triangulation \(\triangulation\) of \(\dHalfTriang\) to a \(\subgroup[H]\)-invariant triangulation of \(\dTriang\) introduces at
    most \(\lfloor\frac{d}2\rfloor\) squares along the symmetry axis, we can obtain at most \(2^{\lfloor\frac{d}2\rfloor}\) additional 
    triangulations from \(\sigma\) by splitting those squares differently.
    This leads to the upper bound.
  \end{proof}
\end{lemma}

These observations allow us to obtain upper bounds for \(\numSymmetricUnimodular(d)\) from those for \(\numUnimodular(\frac{d}2)\).
From our lower bounds we obtained that \(\logSymmetricUnimodular(d)\)
grows at least quadratically. By comparing with results on the number of unimodular triangulations
of an \(n\times m\)-rectangle we can already get a rough estimate for the order of growth for \(\numSymmetricUnimodular(d)\).

For the rectangle, the number of triangulations has been bounded from above by \(2^{6mn}\) due to \citeauthor{Orevkov:1998}~\cite{Orevkov:1998}.
This was improved to a lower bound of \(2^{3mn-m-n}\) by \citeauthor{Anclin:2003}~\cite{Anclin:2003}. 
Setting \(n = 2m = d\), this gives an overestimate for \(\numUnimodular(\frac{d}2)\) by 
\begin{equation}
  \numUnimodular\left(\frac{d}2\right) \leq 2^{\frac34 d^2 - 2d} \leq 2^{\bigO(d^2)}.
\end{equation}
This already implies that \(\logSymmetricUnimodular(d)\) grows quadratically since
\[
  \Omega(d^2)
  \ \leq\ \logUnimodular\left(\frac{d}2\right) 
  \ \leq\ \logSymmetricUnimodular(d) 
  \ \leq\ \logUnimodular\left(\frac{d}2\right) + \left\lfloor\frac{d}2\right\rfloor
  \ \leq\ \bigO(d^2) + \left\lfloor\frac{d}2\right\rfloor.
\]

The upper bound by Anclin holds in greater generality than for \(n\times m\)-rectangles.
We use the following result to obtain a more detailed picture on upper bounds for \(\numSymmetricUnimodular(d)\).
\begin{lemma}[{\cite[Theorem 1]{Anclin:2003}}]\label{lem:anclin-bound}
  The number $\numUnimodular(d)$ of unimodular triangulations of $\dTriang$ is bounded by \[
    \numUnimodular(d) \leq 2^{\numInteriorEdges(d)}
  \] where $\numInteriorEdges(d)$ is the set of non-boundary edges of an arbitrary unimodular triangulation of $\dTriang$.
\end{lemma}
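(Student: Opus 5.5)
We need a proof plan for Anclin's bound: the number of unimodular triangulations of $\dTriang$ is at most $2^{e}$, where $e$ is the number of interior edges.

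First we record that every unimodular triangulation of $\dTriang$ has the same number of interior edges. All triangles have area $\tfrac12$, so there are $d^2$ of them, and all $\tfrac12(d+1)(d+2)$ lattice points are used as vertices. Euler's formula, or the relation $3d^2 = 2e + 3d$, then gives
\[
  \numInteriorEdges(d) = \tfrac{3}{2}d(d-1).
\]
So $e(d)$ is well defined, and it suffices to build an injection from unimodular triangulations into $\{0,1\}^{\numInteriorEdges(d)}$.

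For the encoding, fix a total order on the lattice points of $\dTriang$ (lexicographic, say). Then build each triangulation $\triangulation$ incrementally, edge by edge, following Anclin's peeling argument.
\begin{itemize}
  \item Maintain a current region $R$, initially $\dTriang$. Its boundary is a lattice polygon on which the already-decided edges lie.
  \item At each step, take the lex-smallest boundary lattice point $p$ of $R$ that still has undecided incident edges.
  \item Look at the primitive lattice directions from $p$ into $R$, ordered angularly.
  \item Record one bit per candidate: whether the edge from $p$ in the next candidate direction belongs to $\triangulation$.
\end{itemize}

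The key unimodular fact is that the edges of $\triangulation$ at a boundary vertex $p$, taken in angular order, form a chain of consecutive unimodular triangles. Each triangle is determined by its two bounding primitive vectors, which form a lattice basis. Given the previous edge $[p,q]$, the next edge $[p,r]$ must satisfy $\det(q-p,r-p)=1$ and lie in $R$. Such $r$ form a single lattice line parallel to $q-p$.

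The required property is that at each step there are at most two admissible continuations, selected by one bit, and that each bit can be charged to a distinct interior edge of $\triangulation$. Concretely, once $p$ and the previous neighbour $q$ are fixed, the third vertex of the next triangle on edge $[p,q]$ lies on the line $\{\, r : \det(q-p,r-p)=1 \,\}$. Peeling a leftmost or lowest vertex should force $r$ to be either the lattice point adjacent along that line to the boundary of $R$, or the next one. This is a two-way choice.

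Each such decision creates exactly one new interior edge, $[p,r]$ or $[q,r]$, so the number of bits equals $\numInteriorEdges(d)$. The decoding reconstructs $\triangulation$ uniquely, which gives injectivity and hence $\numUnimodular(d)\le 2^{\numInteriorEdges(d)}$.

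The main obstacle is justifying that only two candidates ever occur. In a general convex region, many points on the line $\det=1$ could complete a unimodular triangle with $[p,q]$. The right choice of the vertex $p$ to peel (an extreme point of $R$ in a fixed generic direction) must force the triangle on $[p,q]$ to be one of two specific ones, namely the two "shortest" completions adjacent to $p$ or to $q$.

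My first attempt would be a convexity/lattice-width argument. Write $\ell$ for the line $\{\, r : \det(q-p,r-p)=1 \,\}$. Any point of $R\cap\ell$ beyond the two nearest to the segment must be blocked: the resulting long triangle would contain a lattice point of $R$ closer to $[p,q]$, which is impossible by unimodularity combined with the extremality of $p$. If this fails for nonconvex intermediate regions, I would fall back on citing \cite[Theorem 1]{Anclin:2003} directly. The paper assumes this lemma anyway, and verifying that its hypotheses (a lattice polygon and the fixed number of interior edges) apply to $\dTriang$ is immediate from the Euler computation above.
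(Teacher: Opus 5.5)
Your computation $\numInteriorEdges(d)=\frac32 d(d-1)$ (from $3d^2=2\numInteriorEdges(d)+3d$) is correct, and it shows that the number of interior edges does not depend on the triangulation. The paper gives no proof of this lemma; it simply cites \cite[Theorem 1]{Anclin:2003}. So your fallback of citing Anclin is exactly the paper's route. The self-contained encoding you sketch, however, has a genuine gap: the step you flag as the main obstacle is false as formulated.

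Take $R=\dTriang$, $p=(0,0)$, $q=(1,0)$. The admissible third vertices lie on $\{y=1\}$. For every $k\in\{0,\dots,d-1\}$ the triangle $\conv\{(0,0),(1,0),(k,1)\}$ has area $\frac12$, lies in $\dTriang$, and occurs in some unimodular triangulation: complete it by any triangulation of the remaining region that uses all lattice points, and by Pick every triangle is then unimodular. So for $d\geq 3$ there are more than two continuations already at the first step. Choosing a different extreme vertex does not help, since every boundary edge of $\dTriang$ has $d$ possible apexes. Your proposed repair cannot work either. Every triangle with apex on $\{r:\det(q-p,r-p)=1\}$ has area $\frac12$, so however long it is, it contains no lattice point besides its vertices, and nothing can block it. The bookkeeping is also off in two places. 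Gluing $\conv\{p,q,r\}$ to $R$ creates two new interior edges when $r$ is interior to $R$. And the first bullet, one bit per candidate direction, would use far more than $\numInteriorEdges(d)$ bits.

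The missing idea is to encode edges by their midpoints rather than by triangles fanned around a vertex.
\begin{itemize}
\item Every edge of a unimodular triangulation is primitive, so its midpoint lies in $(\frac12\ZZ)^2\setminus\ZZ^2$.
\item Two distinct edges with the same midpoint would cross (or overlap) there.
\item In $\dTriang$ there are exactly $\frac32 d(d+1)$ such points, which equals the number of edges.
\end{itemize}
Hence each such point is the midpoint of exactly one edge. The $3d$ points on the boundary are forced, and the remaining $\frac32 d(d-1)=\numInteriorEdges(d)$ carry all the information. The argument behind \cite[Theorem 1]{Anclin:2003} processes these points in a fixed lexicographic order. Its key lemma is that once the edges through all earlier points are fixed, at most two segments through the current point remain compatible. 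That lemma, not a peeling argument, is what gives one bit per interior edge and hence the bound $2^{\numInteriorEdges(d)}$.
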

This result gives an explicit formula for an upper bound if we obtain the number \(\numInteriorEdges(\frac{d}2)\) 
of interior edges of a unimodular triangulation for $\dHalfTriang$.
Using the following result, which is based on Pick's formula, this reduces to a count of interior and boundary lattice points.
\begin{lemma}[{\cite[Lemma 3.\,1.\,3.]{Triangulations}}]\label{lem:pick}
  Let $\Sigma$ be a unimodular triangulation of a point set $A\subset\RR^2$.
  Let $n$ be the number of points in $A$ and $n_b$ be the number of points on the boundary of $\conv{A}$.
  Then, the number of edges in $\Sigma$ is \[
    3n - n_b - 3.
  \]
\end{lemma}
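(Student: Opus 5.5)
The plan is to prove the edge count by double counting the incidences between edges and triangles. Euler's formula supplies the missing relation, and Pick's formula supplies the number of triangles. Let $\Sigma$ be a unimodular triangulation of $A$ with $t$ triangles and $E$ edges, and let $P=\conv A$. Since $\Sigma$ uses all points of $A$ and every triangle has normalized area one, I will first establish $t = 2\,\mathrm{area}(P)$. Pick's formula gives $\mathrm{area}(P) = n_i + \tfrac{n_b}{2} - 1$, where $n_i = n - n_b$ is the number of interior lattice points. Hence
\[
  t \;=\; 2n_i + n_b - 2 \;=\; 2n - n_b - 2 .
\]
Here I am implicitly assuming that $A$ is the set of all lattice points of a lattice polygon, which is the setting of the paper. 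For a general point set one instead uses only that every point of $A$ is a vertex of $\Sigma$; I would note this, since the Euler-characteristic argument below actually gives the same count without Pick.

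In the second step I count edge--triangle incidences. Each triangle has three edges. Each interior edge lies in exactly two triangles, and each boundary edge lies in exactly one. The boundary of $P$ is subdivided by the $n_b$ boundary points into exactly $n_b$ edges, because a triangulation using all points must use every boundary point as a vertex on the boundary cycle. Therefore
\[
  3t = 2(E - n_b) + n_b, \qquad\text{so}\qquad E = \frac{3t + n_b}{2}.
\]
Substituting $t = 2n - n_b - 2$ gives $E = \tfrac{6n - 3n_b - 6 + n_b}{2} = 3n - n_b - 3$, as claimed.

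As a cross-check, and as an alternative route avoiding Pick, Euler's formula for the planar subdivision reads $n - E + t = 1$. Combining it with $3t = 2E - n_b$ gives $t = 2n - n_b - 2$ and $E = 3n - n_b - 3$ directly. This second route shows that the statement really only needs that all $n$ points are used and that the boundary cycle has $n_b$ edges.

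The only step requiring care is the claim that the boundary consists of exactly $n_b$ edges, i.e.\ that no boundary point is skipped and no point lies in the relative interior of a triangulation edge. This follows from the definition of a triangulation of a point set: the faces are simplices spanned by points of $A$, and every point of $A$ used is a vertex. Note also that if $A$ has collinear points along a boundary side, $n_b$ counts all of them rather than just the vertices of $\conv A$. I would state this convention explicitly, because it is exactly what is needed when the lemma is applied to the lattice points of $\dHalfTriang$.
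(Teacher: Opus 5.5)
Your proof is correct. The paper does not prove this lemma itself; it cites Lemma 3.1.3 of the Triangulations book and remarks only that the count is ``based on Pick's formula.'' That remark matches your first route: Pick's formula gives $t = 2n-n_b-2$, and the edge--triangle incidence count $3t = 2E-n_b$ then gives $E$.

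Your Euler-formula route is the more general argument. It shows the count holds for any triangulation of a full-dimensional planar point set that uses all of its points. Unimodularity is needed only to guarantee that all points are used.

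What you call an implicit assumption, $A=\conv A\cap\ZZ^2$, in fact follows from the hypothesis (for $A\subset\ZZ^2$, which ``unimodular'' presupposes). A unimodular triangle contains no lattice points other than its vertices, and the triangles of $\Sigma$ cover $\conv A$. So every lattice point of $\conv A$, and in particular every point of $A$, is a vertex of $\Sigma$. Hence $\Sigma$ uses all of $A$, and $n-n_b$ is exactly the number of interior lattice points that Pick's formula requires.
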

In the following, we provide explicit formulas for \(n\) and \(n_b\) in our case to obtain an explicit formula from Lemma \ref{lem:pick}.
\begin{lemma}
  \begin{enumerate}
    \item The number of lattice points of $\dHalfTriang$ is given by \[
        n(d) = \begin{cases}
          \frac14d^2 + d + 1,& d\text{ even}, \\
          \frac14d^2 + d + \frac34,& d\text{ odd}. 
        \end{cases}
      \]
    \item The number of boundary lattice points of $\dHalfTriang$ is given by \[
        n_b(d) = \begin{cases}
          2d,& d\text{ even},\\
          2d+1,& d\text{ odd}.
        \end{cases}
      \]
  \end{enumerate}
  \begin{proof}
    \begin{enumerate}
      \item In both cases, the number of lattice points of $\dHalfTriang$ satisfies the recursion \[
          n(d) = n(d-2) + d+1
        \] which can be seen by separating the lattice points into the bottom-most row, which contains
        $d+1$ points, and the triangle $(d-2)\cdot T$ above.

        Since $0\cdot T$ is a single point, $n(0) = 1$ and \[
          n(d) = \sum_{k=0}^{\frac{d}2} 2k+1 = \frac{d}2\cdot\frac{d+2}2 + \frac{d}2 + 1 = \frac14d^2 + d + 1.
        \]
        On the other hand, $1\cdot T$ contains $2$ lattice points, which means that for odd $d$ we have \[
          n(d) = \sum_{k=1}^{\frac{d+1}2} 2k = \frac{d+1}2 \cdot \frac{d+3}2 = \frac14d^2 + d + \frac34.
        \]
      \item The base of $\dHalfTriang$ contains $d+1$ lattice points, while the legs each contain 
        $\left\lfloor\frac{d}2\right\rfloor$ lattice points \emph{not} on the base. If $d$ is even, the 
        intersection point of both legs is a lattice point. Thus, for $d$ even, we
        have \[
          n_b(d) = d+1 + 2\frac{d}2 - 1 = 2d.
        \] For $d$ odd, we instead have \[
          n_b(d) = d+1 + 2\frac{d}2 = 2d + 1
        \] where we do not have to subtract one point to account for the top of $\dHalfTriang$. \qedhere
    \end{enumerate}
  \end{proof}
\end{lemma}

Consequently, with Lemma \ref{lem:anclin-bound}, the following function provides an upper bound 
of \(\numUnimodular(\frac{d}2)\):
\begin{equation}
  \UpperBound(d) \coloneqq \begin{cases}
    2^{\frac34d^2 - 2d - 3},& d\text{ even}, \\
    2^{\frac34d^2 + d - \frac74},& d\text{ odd}.
  \end{cases}
\end{equation}

\begin{theorem}\label{thm:upper-lower-bounds}
  The number \(\numUnimodular(\frac{d}2)\) of unimodular triangulations of $\dHalfTriang$
  is bounded from above by \[
    \numUnimodular\left(\frac{d}2\right) \ \leq \ \UpperBound(d) \ \leq \ 2^{\frac34d^2 + d - \frac34} \enspace .
  \] The number \(\numSymmetricUnimodular(d)\) of \(\subgroup[H]\)-invariant unimodular triangulations of \(\dTriang\)
  is bounded by \[
    2^{\Omega(d^2)} \ \leq \ \numSymmetricUnimodular(d) \ \leq \ 2^{\left\lfloor\frac{d}2\right\rfloor}\cdot\UpperBound(d) \ \leq \ 2^{\bigO(d^2)} \enspace .
  \]
\end{theorem}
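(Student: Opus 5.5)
The plan is to assemble the theorem from the lemmas already established, in two parts: first the upper bound on \(\numUnimodular(\frac{d}2)\), then the sandwich for \(\numSymmetricUnimodular(d)\).

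\textbf{Upper bound on \(\numUnimodular(\frac{d}2)\).} I will apply Lemma~\ref{lem:anclin-bound} to \(\dHalfTriang\), a rotated copy of \(\dTriang[\frac{d}2]\). I take Anclin's argument to apply to any lattice polygon, which the paper asserts; this covers the odd case, where \(\dHalfTriang\) is not a lattice triangle. The lemma gives \(\numUnimodular(\frac{d}2)\le 2^{\numInteriorEdges}\), where \(\numInteriorEdges\) is the number of interior edges of any unimodular triangulation of \(\dHalfTriang\).

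To compute \(\numInteriorEdges\), I use Lemma~\ref{lem:pick}: the total number of edges is \(3n-n_b-3\). On the boundary of the convex hull, consecutive boundary lattice points are joined by edges, and these boundary edges form a cycle with \(n_b\) edges. Hence
\[
\numInteriorEdges = 3n - 2n_b - 3 .
\]
Substituting the formulas for \(n(d)\) and \(n_b(d)\) from the preceding lemma gives explicit quadratic polynomials in \(d\), one for each parity. For \(d\) even I get
\[
3\bigl(\tfrac14 d^2+d+1\bigr)-4d-3=\tfrac34 d^2-d,
\]
and the odd case is analogous. Each of these is at most \(\tfrac34 d^2+d-\tfrac34\), which yields the final inequality of the first display.

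\textbf{Expected obstacle.} This bookkeeping is the step I expect to be delicate. I have to match the exponent of \(\UpperBound(d)\) exactly with \(3n-2n_b-3\) in both parities. My even-case value \(\tfrac34 d^2 - d\) is larger than the exponent \(\tfrac34 d^2-2d-3\) in the definition of \(\UpperBound(d)\) for \(d\) even. If a careful recount confirms this, then the middle term \(\UpperBound(d)\) would have to be corrected to \(2^{\numInteriorEdges}\). The outer inequality \(\numUnimodular(\frac{d}2)\le 2^{\frac34 d^2+d-\frac34}\) survives regardless, since every candidate exponent is bounded by \(\tfrac34 d^2+d-\tfrac34\). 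The comparison \(\UpperBound(d)\le 2^{\frac34 d^2+d-\frac34}\) is then immediate by comparing exponents.

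\textbf{Bounds on \(\numSymmetricUnimodular(d)\).}
\begin{itemize}
  \item The lower bound \(2^{\Omega(d^2)}\le\numSymmetricUnimodular(d)\) is the earlier lemma \(\numSymmetricUnimodular(d)\ge\LowerBound1(d)\ge 2^{\Omega(d^2)}\). That lemma comes from the strip decomposition together with \(\binom{2n}{n}\ge 2^{2n}/(2n)\).
  \item The middle inequality follows from the lemma \(\numSymmetricUnimodular(d)\le 2^{\lfloor\frac d2\rfloor}\numUnimodular(\frac d2)\), which rests on Lemma~\ref{lem:dichotomy-symmetric-triangles}, combined with the bound on \(\numUnimodular(\frac d2)\) from the first part.
  \item The last inequality holds because \(\lfloor\frac d2\rfloor+\frac34d^2+d=\bigO(d^2)\).
\end{itemize}
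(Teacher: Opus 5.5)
You follow the same route as the paper: Anclin's bound for $\dHalfTriang\cap\ZZ^2$ combined with Lemma~\ref{lem:pick} and the lattice-point lemma, then the $\LowerBound{1}$ lemma and the $2^{\lfloor d/2\rfloor}$ lemma. Your suspicion about the even case is correct and need not be hedged. The interior-edge count is $3n-2n_b-3=\frac34d^2-d$, and the exponent $\frac34d^2-2d-3$ in $\UpperBound(d)$ is an error. The statement as written is false for small even $d$:
\begin{itemize}
\item $\UpperBound(2)=2^{-4}<1$;
\item $\UpperBound(4)=2$, while $\dHalfTriang[4]$ has $24$ unimodular triangulations;
\item $\UpperBound(6)=2^{12}<14057$;
\item $2^{2}\cdot\UpperBound(4)=8<74=\numSymmetricUnimodular(4)$.
\end{itemize}
The counts are from Table~\ref{tbl:the-numbers}. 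So both middle terms must be replaced by $2^{\numInteriorEdges}$ as you propose, and the outer bounds survive.

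The gap is in the odd case, which you call analogous. If you substitute the lemma's value $n_b(d)=2d+1$, you get $\numInteriorEdges=\frac34d^2-d-\frac{11}4$. The intermediate claim $\numUnimodular(\frac d2)\le 2^{\numInteriorEdges}$ is then false: for $d=3$ it reads $4\le 2$. The lemma itself is wrong for odd $d$. Each leg carries only $\lfloor\frac d2\rfloor=\frac{d-1}2$ lattice points off the base, so $n_b(d)=2d$ in both parities. For $d=3$ the stated value $7$ even exceeds $n(3)=6$.

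Here $n_b$ must be counted on the boundary of the trapezoid $\conv(\dHalfTriang\cap\ZZ^2)$. This is legitimate because Anclin's theorem concerns arbitrary planar point sets with triangulations using all points. The correct count gives $\numInteriorEdges=\frac34d^2-d-\frac34$ for odd $d\ge 3$; the case $d=1$ is degenerate and trivial. This lies below the paper's odd exponent $\frac34d^2+d-\frac74$, which coincides with the \emph{total} edge count $3n-n_b-3$ computed with the wrong $n_b$. So for odd $d$ the stated middle inequality does hold.

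With these corrections every exponent is at most $\frac34d^2+d-\frac34$ for $d\ge 1$, and your assembly of the second display goes through.
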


Kaibel and Ziegler also considered the number of triangulations 
of point configurations on a logarithmic scale.
Normalizing this by the number of points leads to the notion of \emph{capacity}
of a point configuration.

One can also ask for a similar quantity in the case of \(\subgroup[H]\)-invariant triangulations of \(\dTriang\), 
for which the (symmetric) capacity is given by \[
  \symmetricCapacity(d) \coloneq \frac{2\logSymmetricUnimodular(d)}{d(d-1)}.
\]

Computing explicit formulas for the upper and lower bounds from Theorem \ref{thm:upper-lower-bounds}
and taking the logarithms thereof yields
\begin{equation}\label{eqn:explicit-bounds}
  \frac14 d^2 - \frac12 d - (d-2)\log{d} \ \leq\ \logSymmetricUnimodular(d) \ \leq\ \frac34d^2+\frac32d-\frac34 \enspace .
\end{equation}
For the asymptotic behavior of \(\symmetricCapacity(d)\), that is \(\symmetricCapacity\coloneq\lim_{d\to\infty}\symmetricCapacity(d)\),
above inequalities suggest that \(\frac12 \leq \symmetricCapacity \leq \frac32\).
Performing a quadratic regression on the values of \(\logSymmetricUnimodular(d)\) from Table \ref{tbl:logarithmic-counts}
yields an approximate relation of 
\[
  \logSymmetricUnimodular(d) \approx 0.56 d^2 - 0.77 d + 0.21.
\]
Note that this regression disregards the summand of order \(d\log{d}\) from the lower bound in \eqref{eqn:explicit-bounds}. This is negligible
as the lower bound grows asymptotically at least quadratically.
This experimental observation leads to the following conjecture.
\begin{conjecture}
  $\symmetricCapacity = 1$.
\end{conjecture}

\begin{table}
  \centering
  \caption{Logarithmic number of \(\subgroup[H]\)-invariant triangulations of $\dTriang$ up to 
           \(\subgroup[H]\)-feasible symmetries in comparison with some 
           lower and upper bounds. Lower bounds have been rounded up while upper bounds have been rounded down.}
  \label{tbl:logarithmic-counts}
  \begin{tabular}{rrrrrrrrrr}
    \toprule
    d & 1 & 2 & 3 & 4 & 5 & 6 & 7 & 8 & 9 \\
    \midrule
    $\lowerBound2(d)$ & 0 & 0 & 0 & 3 & 6 & 10 & 14 & 20 & 26 \\
    $\logUnimodular(\frac{d}2)$ & 0 & 0 & 2 & 5 & 9 & 14 & 21 & 28 & 37 \\
    \midrule
    $\logSymmetricUnimodular(d)$ & 0 & 1 & 2.8 & 6.2 & 10.2 & 15.9 & 22.2 & 30.2 & 38.6 \\
    \midrule
    $\lfloor\frac{d}2\rfloor+\logUnimodular(\frac{d}2)$ & 0 & 1 & 3 & 6.5 & 10.8 & 16.7 & 23.2 & 31.4 & 40.1 \\
    $\upperBound(d)$ & 2 & 5 & 10 & 16 & 24 & 33 & 44 & 56 & 70 \\
    \bottomrule
\end{tabular}
\end{table}

\section*{Acknowledgments}
We are indebted to Christoph Spiegel, who confirmed the values of $\numSymmetricUnimodular(d)$ for $d\leq 8$ in Table~\ref{tbl:the-numbers} with an independent implementation tailored to the specific case.

KF is supported by the UK Research and Innovation: Engineering and Physical Sciences Research Council under grant reference [EP/Y028872/1].
MJ is aupported by the Deutsche Forschungsgemeinschaft (DFG, German Research Foundation) under Germany's Excellence Strategy -- \enquote{The Berlin Mathematics Research Center MATH$^+$} (EXC-2046/1, EXC-2046/2, pro\-ject ID 390685689), \enquote{Symbolic Tools in Mathematics and their Application} (TRR 195, project ID 286237555), and \enquote{Mathematical Research Data Initiative (MaRDI)} (project ID 460135501).

\printbibliography

\end{document}